\date{}
\newlength{\defbaselineskip}
\newcommand{\setlinespacing}[1]%
           {\setlength{\baselineskip}{#1 \defbaselineskip}}
\newcommand{\N}{{\mathbb{N}}}
\newcommand{\actaqed}{\hfill $\actabox$}
{\medskip\noindent \textit{Proof of #1. }}%
{\actaqed \medskip}
\def\C{{\mathcal C}}
\def \Tr{\mathcal T}
\def\R{{\mathbb R}}
\def\Z{\mathbb Z}
\def \T{\mathbb T}
\def\bbC{\mathbb C}
\def \<{\langle}
\def\>{\rangle}
\def \va{\varepsilon}
\def \ro{\varrho}
\def \sp{\operatorname{span}}
\def\bx{\mathbf x}
\def\by{\mathbf y}
\def\bk{\mathbf k}
\def\bw{\mathbf w}
\def\btt{\mathbf t}
\def\bW{\mathbf W}
\def\bH{\mathbf H}
\def\bF{\mathbf F}
\newtheorem{Theorem}{Theorem}[section]
\newtheorem{Remark}{Remark}[section]
\numberwithin{equation}{section}
\newcommand{\be}{\begin{equation}}
\newcommand{\ee}{\end{equation}}
\begin{document}

\title{On optimal recovery in $L_2$}

\author{V. Temlyakov\thanks{University of South Carolina, Steklov Institute of Mathematics, Lomonosov Moscow State University, and Moscow Center for Fundamental and Applied Mathematics.} } \maketitle

\begin{abstract}
	{We prove that the optimal error of recovery in the $L_2$ norm of functions from a class $\bF$ can be bounded above by the value of the Kolmogorov width of $\bF$ in the uniform norm. We demonstrate on a number of examples of $\bF$ from classes of functions with mixed smoothness that the obtained inequality provides a powerful tool for estimating errors of optimal recovery.}
\end{abstract}

 {\it Keywords and phrases}: error of optimal recovery, discretization, least squares, mixed smoothness.

\section{Introduction}
\label{Int}

The problem of recovery (reconstruction) of an unknown function defined on a subset of  $\R^d$ from its samples at a finite number of points is a fundamental problem of pure and applied mathematics. We would like to construct recovering operators (algorithms) which are good in the sense of accuracy, stability, and computational complexity. In this paper we discuss 
the issue of accuracy. Following a standard in approximation theory approach we define some 
optimal characteristics -- the Kolmogorov widths and errors of optimal recovery -- for a given function class and establish relations between them. We show that in the case of recovery in 
the $L_2$ norm the weighted least squares algorithms are reasonably good recovering methods. Our analysis is based on recent deep results in discretization of the $L_2$ norms of
functions from finite dimensional subspaces  (see \cite{VT158}, \cite{DPSTT2}, and \cite{LT}). 
We point out that the corresponding discretization results were obtained with a help of fundamental results from \cite{BLM}, \cite{BSS}, and \cite{MSS}. We now proceed to a formulation of the main result of the paper. Further discussion is given in Sections \ref{B} and \ref{C}.

Let $\Omega$ be a compact subset of $\R^d$ with the probability measure $\mu$. By $L_p$ norm, $1\le p< \infty$, of the complex valued function defined on $\Omega$,  we understand
$$
\|f\|_p:=\|f\|_{L_p(\Omega,\mu)} := \left(\int_\Omega |f|^pd\mu\right)^{1/p}.
$$
By $L_\infty$ norm we understand the uniform norm of continuous functions
$$
\|f\|_\infty := \max_{\bx\in\Omega} |f(\bx)|.
$$

Recall the setting 
 of the optimal recovery. For a fixed $m$ and a set of points  $\xi:=\{\xi^j\}_{j=1}^m\subset \Omega$, let $\Phi_\xi $ be a linear operator from $\bbC^m$ into $L_p(\Omega,\mu)$.
Denote for a class $\bF$ (usually, centrally symmetric and compact subset of $L_p(\Omega,\mu)$)
$$
\varrho_m(\bF,L_p) := \inf_{\text{linear}\, \Phi_\xi; \,\xi} \sup_{f\in \bF} \|f-\Phi_\xi(f(\xi^1),\dots,f(\xi^m))\|_p.
$$
The above described recovery procedure is a linear procedure. 
The following modification of the above recovery procedure is also of interest. We now allow any mapping $\Phi_\xi : \bbC^m \to X_N \subset L_p(\Omega,\mu)$ where $X_N$ is a linear subspace of dimension $N\le m$ and define
$$
\varrho_m^*(\bF,L_p) := \inf_{\Phi_\xi; \xi; X_N, N\le m} \sup_{f\in \bF}\|f-\Phi(f(\xi^1),\dots,f(\xi^m))\|_p.
$$

In both of the above cases we build an approximant, which comes from a linear subspace of dimension at most $m$. 
It is natural to compare quantities $\varrho_m(\bF,L_p)$ and $\varrho_m^*(\bF,L_p)$ with the 
Kolmogorov widths. Let $\bF\subset L_p$ be a centrally symmetric compact. The quantities  
$$
d_n (\bF, L_p) := \operatornamewithlimits{inf}_{\{u_i\}_{i=1}^n\subset L_p}
\sup_{f\in \bF}
\operatornamewithlimits{inf}_{c_i} \left \| f - \sum_{i=1}^{n}
c_i u_i \right\|_p, \quad n = 1, 2, \dots,
$$
are called the {\it Kolmogorov widths} of $\bF$ in $L_p$. In the definition of
the Kolmogorov widths we take for $f\in \bF$, as an approximating element
from $U := \sp \{u_i \}_{i=1}^n$ the element of best
approximation. This means
that in general (i.e. if $p\neq 2$) this method of approximation is not linear.

We have the following obvious inequalities
\be\label{I1}
d_m (\bF, L_p)\le \varrho_m^*(\bF,L_p)\le \varrho_m(\bF,L_p).
\ee

In this paper we consider the case $p=2$, i.e. recovery takes place in the Hilbert space $L_2$.
The main result of the paper is the following general inequality.
\begin{Theorem}\label{BT1} Let $\bF$ be a compact subset of $\C(\Omega)$. There exist two positive absolute constants $b$ and $B$ such that
$$
\ro_{bn}(\bF,L_2) \le Bd_n(\bF,L_\infty).
$$
\end{Theorem}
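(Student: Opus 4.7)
The plan is to fix an optimal $n$-dimensional subspace $U_n=\sp\{u_1,\dots,u_n\}$ realizing (up to a factor close to $1$) the Kolmogorov width $d_n(\bF,L_\infty)$, and then to design a weighted least squares recovery operator taking values in $U_n$. By definition of $d_n(\bF,L_\infty)$, for every $f\in\bF$ there exists $g=g(f)\in U_n$ with $\|f-g\|_\infty \le 2 d_n(\bF,L_\infty)=:\varepsilon$. The output of the recovery will be the solution of a discrete least squares problem on $U_n$ using $m=bn$ sample points and positive weights, whose existence is guaranteed by the recent Marcinkiewicz-type discretization results cited in the introduction (\cite{VT158,DPSTT2,LT}, building on \cite{BSS}).

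More precisely, the first step is to invoke the discretization theorem: there exist absolute constants $b,C_1,C_2>0$, points $\xi^1,\dots,\xi^m\in\Omega$ with $m\le bn$, and nonnegative weights $w_j$ with $\sum_j w_j \le C_1$, such that for every $u\in U_n$
\[
C_2\|u\|_2^2 \le \sum_{j=1}^m w_j |u(\xi^j)|^2 \le \|u\|_2^2 .
\]
Given the samples $f(\xi^1),\dots,f(\xi^m)$, define the recovery operator
\[
\Phi_\xi\bigl(f(\xi^1),\dots,f(\xi^m)\bigr) := \arg\min_{u\in U_n} \sum_{j=1}^m w_j \bigl|f(\xi^j)-u(\xi^j)\bigr|^2 =: \hat g.
\]
This is linear in the sample values, so it is an admissible operator for $\ro_{bn}(\bF,L_2)$.

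The second step is the error estimate. Since $\hat g$ minimizes the weighted discrete squared error and $g\in U_n$ is a competitor,
\[
\sum_j w_j |f(\xi^j)-\hat g(\xi^j)|^2 \le \sum_j w_j |f(\xi^j)-g(\xi^j)|^2 \le \|f-g\|_\infty^2 \sum_j w_j \le C_1 \varepsilon^2.
\]
Combined with the triangle inequality in the discrete seminorm, this yields $\sum_j w_j|g(\xi^j)-\hat g(\xi^j)|^2 \le 4C_1\varepsilon^2$. Applying the lower discretization inequality to $g-\hat g \in U_n$ gives $\|g-\hat g\|_2^2 \le 4C_1 C_2^{-1}\varepsilon^2$. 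Finally,
\[
\|f-\hat g\|_2 \le \|f-g\|_2 + \|g-\hat g\|_2 \le \|f-g\|_\infty + 2\sqrt{C_1/C_2}\,\varepsilon \le B\, d_n(\bF,L_\infty),
\]
with $B$ an absolute constant.

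The only nontrivial ingredient is the weighted Marcinkiewicz discretization with $m=O(n)$ sample points and bounded total weight, valid for an arbitrary $n$-dimensional subspace of $C(\Omega)$; everything else is a one-line least-squares argument. Thus the main obstacle is not in the proof itself but in locating and applying the correct discretization statement from \cite{VT158,DPSTT2,LT}, in particular verifying that it produces points and weights that work uniformly for any $n$-dimensional subspace of continuous functions on a general $\Omega$ with probability measure $\mu$, and that the constants $b,C_1,C_2$ are genuinely absolute.
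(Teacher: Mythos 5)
Your proposal is correct and is essentially the paper's own proof: a near-optimal subspace for $d_n(\bF,L_\infty)$, the Marcinkiewicz-type $L_2$ discretization with $m\le C n$ points and bounded total weight, the weighted least squares operator (linear in the samples, hence admissible for $\ro_{bn}(\bF,L_2)$), and the identical chain of inequalities (discrete error of the best uniform approximant, minimality of the least squares solution, triangle inequality in the discrete norm, the lower discretization inequality, and a final triangle inequality). The one ingredient you explicitly leave unverified --- that the weights can be taken with $\sum_j w_j \le C_1$, which the discretization theorem as usually stated does not provide --- is supplied in the paper by a small trick worth knowing: apply the discretization theorem to the enlarged subspace $X_n' := \{g+c\,:\, g\in X_n,\ c\in \bbC\}$ and test the upper inequality on the constant function $f=1$, which gives $\sum_j \lambda_j \le C_0'$ at the harmless cost of replacing $m\le C_1'n$ by $m\le C_1'(n+1)$.
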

Note that for special sets $\bF$ (in the reproducing kernel Hilbert space setting) the following inequality is known (see \cite{NSU} and \cite{KU})
$$
\ro_{n}(\bF,L_2) \le C\left(\frac{\log n}{n}\sum_{k\ge cn} d_k (\bF, L_2)^2\right)^{1/2}
$$
with universal constants $C,c>0$. 

We discuss in Section \ref{B} applications of Theorem \ref{BT1} to several classes of functions with mixed smoothness. 

\section{Conditional result}
\label{A}

Let $X_N$ be an $N$-dimensional subspace of the space of continuous functions $\C(\Omega)$. For a fixed $m$ and a set of points  $\xi:=\{\xi^\nu\}_{\nu=1}^m\subset \Omega$ we associate with a function $f\in \C(\Omega)$ a vector
$$
S(f,\xi) := (f(\xi^1),\dots,f(\xi^m)) \in \bbC^m.
$$
Denote
$$
\|S(f,\xi)\|_p:= \left(\frac{1}{m}\sum_{\nu=1}^m |f(\xi^\nu)|^p\right)^{1/p},\quad 1\le p<\infty,
$$
and 
$$
\|S(f,\xi)\|_\infty := \max_{\nu}|f(\xi^\nu)|.
$$
For a positive weight $\bw:=(w_1,\dots,w_m)\in \R^m$ consider the following norm
$$
\|S(f,\xi)\|_{p,\bw}:= \left(\sum_{\nu=1}^m w_\nu |f(\xi^\nu)|^p\right)^{1/p},\quad 1\le p<\infty.
$$
Define the best approximation of $f\in L_p(\Omega,\mu)$, $1\le p\le \infty$ by elements of $X_N$ as follows
$$
d(f,X_N)_p := \inf_{u\in X_N} \|f-u\|_p.
$$
It is well known that there exists an element, which we denote $P_{X_N,p}(f)\in X_N$, such that
$$
\|f-P_{X_N,p}(f)\|_p = d(f,X_N)_p.
$$
The operator $P_{X_N,p}: L_p(\Omega,\mu) \to X_N$ is called the Chebyshev projection. 

We will prove Theorem \ref{AT1} below under the following assumptions.

{\bf A1. Discretization.} Suppose that $\xi:=\{\xi^j\}_{j=1}^m\subset \Omega$ is such that for any 
$u\in X_N$ we have
$$
C_1\|u\|_p \le \|S(u,\xi)\|_{p,\bw}  
$$
with a positive constant $C_1$ which may depend on $d$ and $p$. 

{\bf A2. Weight.} Suppose that there is a positive constant $C_2=C_2(d,p)$ such that 
$\sum_{\nu=1}^m w_\nu \le C_2$.

Consider the following well known recovery operator (algorithm) (see, for instance, \cite{CM})
$$
\ell p\bw(\xi)(f) := \ell p\bw(\xi,X_N)(f):=\text{arg}\min_{u\in X_N} \|S(f-u,\xi)\|_{p,\bw}.
$$
Note that the above algorithm $\ell p\bw(\xi)$ only uses the function values $f(\xi^\nu)$, $\nu=1,\dots,m$. In the case $p=2$ it is a linear algorithm -- orthogonal projection with respect 
to the norm $\|\cdot\|_{2,\bw}$. Therefore, in the case $p=2$ approximation error by the algorithm $\ell 2\bw(\xi)$ gives an upper bound for the recovery characteristic $\ro_m(\cdot, L_2)$. In the case $p\neq 2$ approximation error by the algorithm $\ell p\bw(\xi)$ gives an upper bound for the recovery characteristic $\ro_m^*(\cdot, L_p)$.

\begin{Theorem}\label{AT1} Under assumptions {\bf A1} and {\bf A2} for any $f\in \C(\Omega)$ we have
$$
\|f-\ell p\bw(\xi)(f)\|_p \le (2C_1^{-1}C_2^{1/p} +1)d(f, X_N)_\infty.
$$
\end{Theorem}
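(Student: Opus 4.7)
The natural approach is a triangle-inequality argument with a carefully chosen intermediate element. Let $g := P_{X_N,\infty}(f) \in X_N$ be the Chebyshev projection of $f$ in the uniform norm, so that $\|f-g\|_\infty = d(f,X_N)_\infty$, and write $u^\star := \ell p\bw(\xi)(f)$. Splitting
$$
\|f - u^\star\|_p \le \|f - g\|_p + \|g - u^\star\|_p,
$$
the first term is easy: since $\mu$ is a probability measure, $\|f-g\|_p \le \|f-g\|_\infty = d(f,X_N)_\infty$. So the work is to bound the second term by $2C_1^{-1}C_2^{1/p} d(f,X_N)_\infty$.

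For the second term I would exploit that $g - u^\star \in X_N$, which is exactly the hypothesis that triggers assumption \textbf{A1}:
$$
\|g - u^\star\|_p \le C_1^{-1}\,\|S(g - u^\star,\xi)\|_{p,\bw}.
$$
Now $\|S(\cdot,\xi)\|_{p,\bw}$ is an honest (semi)norm, so a discrete triangle inequality gives
$$
\|S(g - u^\star,\xi)\|_{p,\bw} \le \|S(f - g,\xi)\|_{p,\bw} + \|S(f - u^\star,\xi)\|_{p,\bw}.
$$
The definition of $u^\star$ as the minimizer makes the second summand at most the first, yielding $\|S(g-u^\star,\xi)\|_{p,\bw} \le 2\|S(f-g,\xi)\|_{p,\bw}$.

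The final step is to pass from this discrete norm of $f-g$ back to a continuous quantity — and here the only tool available is the crude pointwise bound $|f(\xi^\nu)-g(\xi^\nu)| \le \|f-g\|_\infty$, combined with assumption \textbf{A2}:
$$
\|S(f-g,\xi)\|_{p,\bw} \le \|f-g\|_\infty \left(\sum_{\nu=1}^m w_\nu\right)^{1/p} \le C_2^{1/p}\, d(f,X_N)_\infty.
$$
Chaining these estimates produces the claimed constant $2C_1^{-1}C_2^{1/p} + 1$.

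I do not anticipate any serious obstacle: the proof is essentially a bookkeeping exercise once one recognizes that $g$ must be the $L_\infty$-best-approximant (not the $L_p$- or $L_{p,\bw}$-best-approximant), because that is what makes the crude pointwise bound $|(f-g)(\xi^\nu)| \le \|f-g\|_\infty$ interact correctly with \textbf{A2}. The only mild subtlety is that \textbf{A1} is a one-sided (Marcinkiewicz-type lower) inequality on $X_N$, so it is crucial that the element $g - u^\star$ to which we apply it actually lies in $X_N$ — which motivates the specific triangle split above rather than, say, $\|f - u^\star\|_p \le \|f - g\|_p + \|g - u^\star\|_p$ with $g$ chosen differently.
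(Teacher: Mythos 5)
Your proposal is correct and follows essentially the same route as the paper: the same intermediate element $P_{X_N,\infty}(f)$, the same use of the minimizing property of $\ell p\bw(\xi)$ together with \textbf{A2} to bound the discrete norm by $2C_2^{1/p}d(f,X_N)_\infty$, and the same application of \textbf{A1} to the difference lying in $X_N$. The only difference is cosmetic ordering of the triangle inequalities.
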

\begin{proof}
 From the definition of the operator $P_{X_N,\infty}$ we obtain  
\be\label{A1}
\|f-P_{X_N,\infty}(f)\|_p \le \|f-P_{X_N,\infty}(f)\|_\infty = d(f, X_N)_\infty.
\ee
Clearly,
$$
\|S(f-P_{X_N,\infty}(f),\xi)\|_\infty \le \|f-P_{X_N,\infty}(f)\|_\infty = d(f, X_N)_\infty.
$$
Therefore, by {\bf A2} we get
\be\label{A2}
\|S(f-P_{X_N,\infty}(f),\xi)\|_{p,\bw} \le C_2^{1/p} \|S(f-P_{X_N,\infty}(f),\xi)\|_\infty \le C_2^{1/p}d(f, X_N)_\infty.
\ee
Next, by the definition of the algorithm $\ell p\bw(\xi)$ and by {\bf A2} we obtain
\be\label{A3}
\|S(f-\ell p\bw(\xi)(f),\xi)\|_{p,\bw} \le \|S(f-P_{X_N,\infty}(f),\xi)\|_{p,\bw} \le C_2^{1/p}d(f, X_N)_\infty.
\ee
Bounds (\ref{A2}) and (\ref{A3}) imply
\be\label{A4}
\|S(P_{X_N,\infty}(f)-\ell p\bw(\xi)(f),\xi)\|_{p,\bw} \le 2C_2^{1/p}d(f, X_N)_\infty.
\ee
Then, the discretization assumption {\bf A1} implies
\be\label{A5}
\|P_{X_N,\infty}(f)-\ell p\bw(\xi)(f)\|_{p} \le C_1^{-1} 2C_2^{1/p}d(f, X_N)_\infty.
\ee
Combining bounds (\ref{A1}) and (\ref{A5}) we conclude
$$
\|f-\ell p\bw(\xi)(f)\|_p \le (1+2C_1^{-1} C_2^{1/p})d(f, X_N)_\infty,
$$
which completes the proof of Theorem \ref{AT1}.
\end{proof}

\section{Applications}
\label{B}

{\bf Proof of Theorem \ref{BT1}.} Let $X_n$ be a subspace of dimension $n$ satisfying: for all $f\in \bF$
\be\label{B1}
d(f,X_n)_\infty \le 2d_n(\bF,L_\infty).
\ee
We now use a result on discretization in $L_2$ from \cite{LT} (see Theorem 3.3 there), which is a generalization to the complex case of an earlier result from \cite{DPSTT2} established for the real case. 

\begin{Theorem}\label{BT2} If $X_N$ is an $N$-dimensional subspace of the complex $L_2(\Omega,\mu)$, then there exist three absolute positive constants $C_1'$, $c_0'$, $C_0'$,  a set of $m\leq   C_1'N$ points $\xi^1,\ldots, \xi^m\in\Omega$, and a set of nonnegative  weights $\lambda_j$, $j=1,\ldots, m$,  such that
\[ c_0'\|f\|_2^2\leq  \sum_{j=1}^m \lambda_j |f(\xi^j)|^2 \leq  C_0' \|f\|_2^2,\  \ \forall f\in X_N.\]
\end{Theorem}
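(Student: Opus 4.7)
My plan is to follow the two-stage strategy by which Marcinkiewicz--Zygmund discretization theorems with $m\asymp N$ sample points are typically established: a Christoffel-function-weighted sampling that reduces the problem to an isotropic random-matrix question, followed by a sparsification step that removes the logarithmic overhead. The complex-scalar case is obtained from the real one by a standard real-representation identification.

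I would first fix an orthonormal basis $\{u_1,\dots,u_N\}$ of $X_N$ in $L_2(\Omega,\mu)$ and form the reproducing-kernel diagonal $K(\bx):=\sum_{i=1}^N|u_i(\bx)|^2$, which satisfies $\int_\Omega K\,d\mu = N$. With the probability measure $d\nu:=K(\bx)\,d\mu/N$ and the unit vector $v(\bx):=K(\bx)^{-1/2}(u_1(\bx),\dots,u_N(\bx))\in\bbC^N$, one checks that for every $f=\sum c_iu_i\in X_N$ the identities $\|f\|_2^2=\|c\|_2^2$ and $|f(\bx)|^2 = K(\bx)\, c^* v(\bx)v(\bx)^* c$ hold (under the routine conjugate-linear identification of coefficient vectors). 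Choosing weights of the form $\lambda_j=\mu_j/K(\xi^j)$ therefore converts the claimed two-sided cubature inequality on $X_N$ into the operator inequality
$$
c_0' I_N \preceq \sum_{j=1}^m \mu_j\, v(\xi^j)v(\xi^j)^* \preceq C_0' I_N
$$
on $\bbC^N$, with $m\le C_1' N$ and $\mu_j\ge 0$. Thus the theorem reduces to exhibiting a sparse nonnegative decomposition of a near-identity matrix using unit vectors drawn from the continuous isotropic family $\{v(\bx):\bx\in\Omega\}$.

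Importance sampling $\xi^j$ i.i.d.\ from $\nu$ with equal weights $\mu_j=N/M$, combined with the matrix Chernoff bound, already yields this operator inequality with $M=O(N\log N)$ and constants close to $1$. Removing the logarithmic factor is the heart of the argument and calls on the deep rank-one sparsification results cited in the paper: either the Batson--Spielman--Srivastava interlacing-polynomial construction \cite{BSS}, which deterministically extracts $m\asymp N$ summands from any finite isotropic decomposition of the identity, or the Marcus--Spielman--Srivastava resolution of Weaver's $KS_2$ conjecture \cite{MSS}, which partitions a bounded isotropic system into well-conditioned pieces and can be iterated. An approximation of the continuous isotropic family by finite subfamilies, in the spirit of the invertibility estimates in \cite{BLM}, is what makes BSS/MSS directly applicable in this continuous setting.

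Finally, to upgrade from the real-scalar version of \cite{DPSTT2} to complex scalars, I would identify $\bbC^N$ with $\R^{2N}$ via $a+ib\mapsto(a,b)$: each rank-one Hermitian projector $vv^*$ on $\bbC^N$ becomes a real symmetric operator on $\R^{2N}$ with the same quadratic form, so applying the real sparsification to the doubled system yields the complex statement at the cost of only a constant factor in each of $C_1', c_0', C_0'$. The main obstacle is unavoidably the removal of the $\log N$ factor: naive random sampling gives only $m=O(N\log N)$, and compressing this down to $m\le C_1' N$ is precisely the content of the BSS/MSS machinery. Everything else -- the Christoffel reparametrization, the matrix Chernoff step, and the complex-to-real identification -- is routine once that deep sparsification is available.
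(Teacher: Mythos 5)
The paper does not prove Theorem \ref{BT2} at all: it is imported as a black box from \cite{LT} (Theorem 3.3 there), which in turn is stated to be the complex-scalar extension of the real-case result of \cite{DPSTT2}. So there is no in-paper argument to compare yours against. What you have written is an outline of how that cited literature actually proceeds, and in its broad strokes --- the Christoffel-function reweighting $\lambda_j=\mu_j/K(\xi^j)$, the reduction to a two-sided operator inequality for $\sum_j \mu_j v(\xi^j)v(\xi^j)^*$, and sparsification down to $m\le C_1'N$ rank-one terms --- it is faithful to it. One small point of economy: since the weights here are only required to be nonnegative, the Batson--Spielman--Srivastava theorem \cite{BSS} alone suffices; the Marcus--Spielman--Srivastava machinery \cite{MSS} is what is needed for the \emph{equal-weight} statements such as Theorems \ref{BT3} and \ref{CT2}, not for this one.

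As a standalone proof, however, your sketch has one genuine gap: the passage from the continuous isotropic family $\{v(\bx)\}_{\bx\in\Omega}$ to a \emph{finite} decomposition of the identity to which \cite{BSS} can be applied. Matrix Chernoff with $M=O(N\log N)$ i.i.d.\ samples only produces an approximate decomposition with some failure probability, and ``an approximation of the continuous family in the spirit of \cite{BLM}'' is a gesture, not an argument --- this reduction is precisely where the work lies for a general compact $\Omega$ and general $\mu$. The standard way to close it (and essentially what the cited proofs do) is to observe that the functions $|f|^2$, $f\in X_N$, span a real linear space of dimension at most $N^2$, so by Tchakaloff/Carath\'eodory there is an \emph{exact} cubature formula $\int_\Omega |f|^2\,d\mu=\sum_{k=1}^{M}\tau_k|f(x^k)|^2$ with $M\le N^2+1$ nodes and nonnegative weights; this is a finite exact isotropic decomposition, and \cite{BSS} then deterministically extracts $m\le CN$ weighted terms with two-sided constants. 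Your complex-to-real identification at the end is correct (doubling the dimension and the constants), though \cite{LT} handles the complex case directly. With the Tchakaloff step supplied in place of the Chernoff/\cite{BLM} paragraph, your outline becomes a correct proof along the same lines as the source the paper relies on.
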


For our application we need to satisfy the assumption {\bf A2} on weights. 
\begin{Remark}\label{BR1} Considering a new subspace $X_N' := \{f\,:\, f= g+c, \, g\in X_N,\, c\in \bbC\}$ and applying Theorem \ref{BT2} to 
the $X_N'$ with $f=1$ ($g=0$, $c=1$) we conclude that a version of Theorem \ref{BT2} holds with $m\le C_1'N$ replaced by $m\le C_1'(N+1)$ and with weights satisfying 
$$
\sum_{j=1}^m \lambda_j \le C_0'.
$$
\end{Remark}

Let now $\xi=\{\xi^\nu\}_{\nu=1}^m$ be the set of points from Theorem \ref{BT2} and Remark \ref{BR1} with $X_N = X_n$. Then $m\le bn$ and assumptions {\bf A1} and {\bf A2} are satisfied with absolute constants $C_i$, $i=1,2$. Applying Theorem \ref{AT1} we complete the proof 
of Theorem \ref{BT1}.

%\end{proof}
We now proceed to applications of Theorem \ref{BT1} for classes of functions of mixed smoothness.
We define the class $\bW^r_{q}$ in the following way. For $r > 0$   the functions
$$
F_r(x):=1+2\sum_{k=1}^{\infty}k^{-r}\cos(kx - r\pi/2)
$$
are called {\it Bernoulli kernels}.
 Let  
$$
F_r (\bx) := \prod_{j=1}^d F_r (x_j)
$$
be the multivariate analog of the Bernoulli kernel. 
We denote by $\bW_{q}^r$ the class of functions
$f(\bx)$ representable in the form
$$
f(\bx) =\varphi(\bx)\ast F_r (\bx) :=
(2\pi)^{-d}\int_{\T^d}\varphi(\by)F_r (\bx-\by)d\by,
$$
where $\varphi\in L_q$ and $\|\varphi\|_q\le 1$. In this case the function
$\varphi$ is called
$r$-derivative of $f$ and is denoted by
$\varphi(\bx) = f^{(r)}(\bx)$.
Note that in the case of integer $r$ the class $\bW^r_{q}$   is equivalent to the class defined by restrictions on mixed
derivatives. 

{\bf 1. Recovery of $\bW^r_2$.} The following upper bound is known (see \cite{TrBe})  \be\label{B2}
d_n(\bW^r_{2}, L_\infty) \le C(r,d) n^{-r}(\log n)^{(d-1)r+1/2},\quad    r>1/2.
\ee
 By Theorem \ref{BT1} we obtain from bound (\ref{B2})  the estimate
\be\label{B3}
\ro_n(\bW^r_{2}, L_2) \le C'(r,d) n^{-r}(\log n)^{(d-1)r+1/2}, \quad r>1/2.
\ee
Very recently bound (\ref{B3}) was obtained in \cite{NSU}. This is the best known upper bound.
For the previous breakthrough result see \cite{KU}. Thus, we demonstrate here that  Theorem \ref{BT1} is a rather powerful tool in estimation of the recovery numbers. The right order of the quantity $\ro_n(\bW^r_{2}, L_2)$ is not known.
The reader can find related results in \cite{VTbookMA}, Ch.6, \cite{DTU}, Ch.5, and \cite{NSU}.

{\bf 2. Recovery of $\bW^r_1$.} We define the best $n$-term approximation with respect to the trigonometric system $\Tr^d:=\{e^{i(\bk,\bx)}\}_{\bk\in \Z^d}$ as follows
$$
\sigma_n(f)_p := \inf_{\bk^1,\dots,\bk^n}\inf_{c_1,\dots,c_n}\left\|f-\sum_{j=1}^n c_je^{i(\bk^j,\bx)}\right\|_p.
$$
The following result is known (see, for instance, \cite{VTbookMA}, p.466)
\be\label{B4}
\sigma_n(F_{r})_\infty \le C(r,d)n^{-r+1/2}(\log n)^{r(d-1)+1/2},\quad r>1.
\ee
Bound (\ref{B4}) and the definition of the class $\bW^r_{1}$ imply that there exists 
a set of frequencies $\Lambda_n =\{\bk^j\}_{j=1}^n$ such that for any $f\in \bW^r_{1}$
we have
\be\label{B5}
d(f,\Tr(\Lambda_n))_\infty \le C(r,d)n^{-r+1/2}(\log n)^{r(d-1)+1/2},
\ee
where we use the notation
$$
\Tr(\Lambda):=\{f\,:\, f=\sum_{\bk\in\Lambda} c_\bk e^{i(\bk,\bx)}\}.
$$
We now need a discretization result from \cite{VT158} (see Theorem 1.1 there).

\begin{Theorem}\label{BT3} There are three positive absolute constants $C_1$, $C_2$, and $C_3$ with the following properties: For any $d\in \N$ and any $Q\subset \Z^d$   there exists a set of  $m \le C_1|Q| $ points $\xi^j\in \T^d$, $j=1,\dots,m$, such that for any $f\in \Tr(Q)$ 
we have
$$
C_2\|f\|_2^2 \le \frac{1}{m}\sum_{j=1}^m |f(\xi^j)|^2 \le C_3\|f\|_2^2.
$$
\end{Theorem}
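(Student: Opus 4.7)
The plan is to cast the discretization problem as a sparsification question for a finite frame on $\bbC^{|Q|}$, and to invoke a Marcus--Spielman--Srivastava (MSS) style partition theorem. The point is that the factor $|Q|$ (with no logarithmic loss) and the equal weighting $1/m$ together rule out straightforward probabilistic or BSS-with-weights approaches and pin us down to a Kadison--Singer type tool.

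First I would establish exact quadrature on a large auxiliary grid. Pick $M_\ell\in\N$ larger than the $\ell$-th coordinate diameter of $Q$ for each $\ell=1,\dots,d$, set $M:=\prod_\ell M_\ell$, and
\[G_M:=\Big\{\Big(\tfrac{2\pi j_1}{M_1},\dots,\tfrac{2\pi j_d}{M_d}\Big):0\le j_\ell<M_\ell\Big\}.\]
Character orthogonality then yields the \emph{exact} identity $\frac{1}{M}\sum_{\eta\in G_M}|f(\eta)|^2=\|f\|_2^2$ for every $f\in\Tr(Q)$. Equivalently, setting $v_\eta:=M^{-1/2}(e^{i(\bk,\eta)})_{\bk\in Q}\in\bbC^{|Q|}$, we have $\sum_{\eta\in G_M}v_\eta v_\eta^*=I_{|Q|}$, while $\|v_\eta\|^2=|Q|/M$ becomes arbitrarily small once $M$ is large.

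Next I would apply the MSS theorem in its equal-partition form: there is an absolute $\alpha\in(0,1)$ and an integer $R\asymp M/|Q|$ such that, once $|Q|/M$ is below an absolute threshold, $G_M$ splits into blocks $S_1,\dots,S_R$ with
\[\alpha\,\frac{I}{R}\preceq\sum_{\eta\in S_r}v_\eta v_\eta^*\preceq\alpha^{-1}\,\frac{I}{R},\qquad r=1,\dots,R.\]
Pick the smallest block $S_r$; it satisfies $m:=|S_r|\le M/R\le C_1|Q|$. Translating the operator inequality back to function-space language via Parseval, and cancelling the normalisations ($v_\eta$ carries an $M^{-1/2}$ and $1/R\asymp |Q|/M\asymp 1/m$), yields absolute constants $C_2,C_3>0$ and
\[C_2\|f\|_2^2\;\le\;\frac{1}{m}\sum_{\eta\in S_r}|f(\eta)|^2\;\le\;C_3\|f\|_2^2,\qquad f\in\Tr(Q),\]
as required.

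The main obstacle is accessing the right quantitative form of the MSS partition. The original theorem partitions an isotropic frame into two approximately isotropic halves, which would only give $m\asymp|Q|\log(M/|Q|)$ by naive iteration and would force the logarithm of the auxiliary grid size -- which can be huge -- into the bound. To obtain $R\asymp M/|Q|$ near-balanced blocks with both spectral bounds preserved one must iterate the two-part result while showing that the distortion accumulates at a geometric rate strictly less than $\alpha^{-1}$, or invoke later refinements in the same circle of ideas. Keeping the resulting constants absolute -- in particular independent of $Q$, $d$, and the auxiliary grid -- relies only on the fact that the per-vector bound $|Q|/M$ can be forced below any prescribed absolute threshold by enlarging $M$, which costs nothing in the final estimate.
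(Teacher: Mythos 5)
The paper does not actually prove Theorem \ref{BT3}: it is imported verbatim from \cite{VT158} (Theorem 1.1 there), so there is no internal proof to compare against. Your proposal reconstructs essentially the argument of that reference: exact quadrature on a full rectangular grid turns the normalized exponentials into an isotropic system of vectors of equal small norm $|Q|/M$, and the equal-partition (Weaver/Kadison--Singer) form of the MSS theorem --- which is precisely the lemma of Nitzan, Olevskii and Ulanovskii, obtained by iterating the two-set version with geometrically decaying accumulated distortion, i.e.\ exactly the ``later refinement'' you allude to as the main obstacle --- produces a block of $\asymp |Q|$ nodes with two-sided absolute frame bounds. The argument is sound; the one slip is the parenthetical chain $1/R\asymp |Q|/M\asymp 1/m$ (in fact $1/m\asymp 1/|Q|$, not $|Q|/M$), but this is harmless: since all $v_\eta$ have \emph{equal} norm, taking traces of the block inequalities forces $|S_r|\asymp M/R$ for every block, so the normalizations cancel exactly as you intend and the constants remain absolute.
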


Therefore, conditions {\bf A1} and {\bf A2} are satisfied for the $\Tr(\Lambda_n)$ and by Theorem \ref{AT1} we obtain from  (\ref{B5}) 
\be\label{B6}
\ro_n(\bW^r_{1},L_2) \le C(r,d)n^{-r+1/2}(\log n)^{r(d-1)+1/2},\quad r>1.
\ee
Moreover, as a recovering algorithm we can take the $\ell2\bw_n(\xi)$ with $\bw_n=(1/n,\dots,1/n)$, which is a standard least squares algorithm. 

{\bf Recovery of $\bH^r_2$.} We now turn our discussion to the classes $\bH^r_2$.
Let  $\btt =(t_1,\dots,t_d )$ and $\Delta_{\btt}^l f(\bx)$
be the mixed $l$-th difference with
step $t_j$ in the variable $x_j$, that is
$$
\Delta_{\btt}^l f(\bx) :=\Delta_{t_d}^l\dots\Delta_{t_1}^l
f(x_1,\dots ,x_d ) .
$$
Let $e$ be a subset of natural numbers in $[1,d ]$. We denote
$$
\Delta_{\btt}^l (e) =\prod_{j\in e}\Delta_{t_j}^l,\qquad
\Delta_{\btt}^l (\varnothing) = I .
$$
We define the class $\bH_{q,l}^r B$, $l > r$, as the set of
$f\in L_q$ such that for any $e$
\be\label{B7}
\bigl\|\Delta_{\btt}^l(e)f(\bx)\bigr\|_q\le B
\prod_{j\in e} |t_j |^r .
\ee
In the case $B=1$ we omit it. It is known (see, for instance, \cite{VTbookMA}, p.137) that the classes $\bH^r_{q,l}$ with different $l$ are equivalent. So, for convenience we fix one $l= [r]+1$ and omit $l$ from the notation. The following bound for the Kolmogorov width is known (see 
\cite{Bel})
\be\label{B7}
d_n(\bH^r_{2}, L_\infty) \le C(r,d) n^{-r}(\log n)^{(d-1)(r+1/2)+1/2},\quad    r>1/2.
\ee
We obtain from bound (\ref{B7}) by Theorem \ref{BT1} the estimate
\be\label{B8}
\ro_n(\bH^r_{2}, L_2) \le C'(r,d) n^{-r}(\log n)^{(d-1)(r+1/2)+1/2}, \quad r>1/2.
\ee
Let us make a brief historical comment on optimal recovery of classes $\bH^r_p$ in $L_p$. For more detailed discussion we refer the reader to \cite{DTU} and \cite{VTbookMA}. The first result 
 in this direction was established in \cite{Tem8}
$$
\ro_n(\bH^r_{p}, L_p) \le C(r,d,p) n^{-r}(\log n)^{(d-1)(r+1)}, \quad r>1/p,\quad 1\le p\le \infty.
$$
We note that the problem of the right asymptotic behavior of $\ro_n(\bH^r_{p}, L_p)$, $1\le p\le \infty$, is a great open problem. As far as we know it is only solved in the case $d=2$, $p=\infty$
(see, for instance, \cite{VTbookMA}, p.308):
$$
\ro_n(\bH^r_{\infty}, L_\infty) \asymp n^{-r}(\log n)^{r+1}.
$$

\section{Discussion}
\label{C}

The discretization Theorem \ref{BT2} plays a key role in the proof of the main result of the paper -- Theorem \ref{BT1}. We would like to extend Theorem \ref{BT1} from recovery in $L_2$ to recovery in $L_p$, $1\le p<\infty$. Theorem \ref{AT1} provides the required bound for the 
algorithm $\ell p \bw(\xi)$ for all $1\le p<\infty$. However, we do not have an analog of Theorem 
\ref{BT2} for $p\neq 2$. For the reader's convenience we present here some relevant discretization results. The following result is from \cite{DPSTT2}.

\begin{Theorem}\label{CT1} Given  $1\leq p\leq 2$,      an arbitrary  $N$-dimensional subspace $X_N$  of $L_p(\Omega,\mu)$ and any $\va\in (0, 1)$,  there exist   $\xi^1,\ldots, \xi^m\in\Omega$ and   $w_1,\ldots, w_m>0$  such that  $m\leq C_p(\va) N\log^3 N$ 	 and
	\begin{align}\label{C1}
	(1-\va)	\|f\|_{p} \leq   \left(\sum_{\nu=1}^m w_\nu |f(\xi^\nu)|^p\right)^{\frac1p}\leq (1+\va) \|f\|_{p},\   \  \forall f\in X_N.
	\end{align}	
\end{Theorem}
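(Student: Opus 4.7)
The plan is to prove the Marcinkiewicz-type $L_p$ discretization of Theorem~\ref{CT1} via a two-stage strategy: first produce a ``thick'' discretization by taking $M$ i.i.d.\ sample points from $\mu$, then sparsify down to $m\lesssim N\log^3 N$ nodes. Throughout I would normalize to $f\in X_N$ with $\|f\|_p=1$ and aim for a two-sided deviation of order $\va$.

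\textbf{Stage 1 (random sampling with truncation).} Draw $\xi^1,\dots,\xi^M$ i.i.d.\ from $\mu$ and take uniform weights $w_\nu=1/M$. Then $\frac{1}{M}\sum_\nu |f(\xi^\nu)|^p$ has mean $\|f\|_p^p$, and I need uniform control of its deviation on the unit sphere of $X_N$. Because functions in $X_N$ can have $\|f\|_\infty/\|f\|_p$ as large as $\sqrt{N}$ (Nikolskii-type ratio), a direct Bernstein bound fails; I would truncate $|f|^p$ at a level $\tau$ chosen so that the untruncated tail contributes at most $\va$ in $L_1$, then apply a Bousquet/Talagrand concentration inequality uniformly over an $\va$-net of $B(X_N):=\{f\in X_N\colon\|f\|_p\le 1\}$. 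Since the $\va$-entropy of $B(X_N)$ at scale $\delta$ is of order $N\log(1/\delta)$, Dudley chaining yields the desired uniform $\va/2$-control with sample size $M\le C_p(\va)N\log^a N$ for some moderate $a$.

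\textbf{Stage 2 (sparsification).} To pass from $M$ uniformly weighted points to $m\lesssim N\log^3 N$ points with general positive weights, I would apply a weighted subsampling procedure. For $p=2$ this is exactly the Batson--Spielman--Srivastava spectral sparsification that powers Theorem~\ref{BT2}: write the frame operator as $\sum_\nu \phi(\xi^\nu)\phi(\xi^\nu)^*$ for a basis $\phi$ of $X_N$ and sparsify the sum of rank-one matrices. For $p\in[1,2)$, I would reduce to the quadratic case by an integral representation such as $|t|^p=c_p\int_0^\infty (1-\cos(st))s^{-1-p}\,ds$, which realizes the $L_p$-norm as a positive mixture of squared linear functionals. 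Applying $L_2$ sparsification on each ``slice'' of the representation and discretizing the radial parameter $s$ introduces a handful of extra logarithmic factors and yields the final $N\log^3 N$ bound. This reduction is also where the restriction $p\le 2$ becomes essential, since one is exploiting the type-2 character of $L_p$ in this range.

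The principal obstacle is the sparsification step for $1\le p<2$: no clean analogue of BSS is known for non-quadratic objective functions, and controlling the loss incurred when discretizing the radial integral is precisely what pushes the exponent on $\log N$ up from $0$ (the BSS case) to $3$. A secondary delicate point is calibrating the truncation level $\tau$ in stage~1: it must be large enough that the tail of $|f|^p$ on $X_N$ is negligible, yet small enough that the Bernstein variance proxy, and hence $M$, remains of order $N\log^a N$. Balancing these two stages so that the compounded logarithmic losses stay at $\log^3 N$ rather than a larger power is the technical heart of the argument.
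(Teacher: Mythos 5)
First, note that the paper does not prove Theorem~\ref{CT1}; it is imported verbatim from \cite{DPSTT2}, so the comparison below is with the proof given there.

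Your proposal has a genuine gap at its foundation: Stage~1 samples i.i.d.\ from $\mu$ with uniform weights and asserts that $\|f\|_\infty/\|f\|_p$ is at most of order $\sqrt{N}$ on $X_N$. For an \emph{arbitrary} $N$-dimensional subspace of $L_p(\Omega,\mu)$ no such bound holds --- take $X_N$ spanned by functions supported on a set of tiny $\mu$-measure; then i.i.d.\ samples from $\mu$ essentially never see the functions, and no truncation level $\tau$ repairs this. The weights $w_\nu$ in the statement are not a convenience to be added later by sparsification: they are forced, and they arise from a change of density. The proof in \cite{DPSTT2} first replaces $\mu$ by a Lewis-type density $w\,d\mu$ under which every $f\in X_N$ satisfies a Nikolskii inequality $\|f\|_\infty \le C N^{1/p}\|f\|_p$, then samples i.i.d.\ from $w\,d\mu$ and sets $w_\nu$ proportional to $1/w(\xi^\nu)$ (importance sampling). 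With the Nikolskii inequality in hand, one bounds the entropy numbers of the $L_p$-unit ball of $X_N$ in the uniform norm and runs a chaining plus Bernstein-type concentration argument; the entire $(\log N)^3$ factor comes from that chaining computation, and this is also where the restriction $p\le 2$ enters. Your Stage~1 is in the right spirit (empirical process, truncation, entropy/chaining) but omits the one idea --- the change of density --- without which the scheme fails for general $X_N$.

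Stage~2 is both unnecessary and unsound as written. The identity $|t|^p=c_p\int_0^\infty(1-\cos(st))\,s^{-1-p}\,ds$ writes $|f(x)|^p$ as a mixture of $\sin^2\bigl(s f(x)/2\bigr)$, which is the square of a \emph{nonlinear} function of the coefficient vector of $f$; the Batson--Spielman--Srivastava machinery requires sums of rank-one positive semidefinite matrices $\phi\phi^*$ with $\phi$ depending linearly on those coefficients, so it does not apply to these ``slices.'' Indeed, no BSS-type sparsification is known for $p\neq 2$, which is precisely why the known proof avoids any sparsification step and why Theorem~\ref{CT1} carries the factor $(\log N)^3$ instead of the log-free bound of Theorem~\ref{BT2}. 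Finally, your accounting of the logarithms is internally inconsistent: if Stage~1 already yields $M\le C_p(\va)N\log^a N$ with $a\le 3$, Stage~2 buys nothing, and if $a>3$ the sparsification you describe could not reduce the count without the (unavailable) $L_p$ analogue of BSS.
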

An important good feature of Theorem \ref{CT1} is that it applies to any subspace. However,
here is a reason why a combination of Theorem \ref{CT1} (instead of Theorem \ref{BT2}) and Theorem \ref{AT1} does not give a new result. There is a restriction $p\le 2$ in Theorem \ref{CT1}. It is well known that $\|\cdot\|_p \le \|\cdot\|_2$ provided $p\le 2$. Therefore, 
Theorem \ref{BT1} covers the case $1\le p<2$ as well. Next, a version of Theorem \ref{CT1}
for $p>2$ would give a new result
\be\label{C2}
\ro_{bn(\log n)^3}^*(\bF,L_p) \le Bd_n(\bF,L_\infty)
\ee
with $b$ and $B$ allowed to depend on $p$. However, there is no known analog of Theorem \ref{CT1} for $2<p<\infty$. 

We may want to have the recovery algorithm $\ell 2 \bw(\xi)$ to be a classical least squares 
algorithm, i.e. $\bw=\bw_m:=(1/m,\dots,1/m)$. For that we need an analog of the discretization Theorem
\ref{BT2} with the weight $\bw_m $. There is such an analog of Theorem \ref{BT2} 
but under an extra assumption on the subspace $X_N$. First, we formulate the corresponding theorem from \cite{LT} and then we give the definition of Condition E($t$).    
 \begin{Theorem}\label{CT2} Let  $\Omega\subset \R^d$ be a compact set with the probability measure $\mu$. Assume that $\{u_i(\bx)\}_{i=1}^N$ is a real (or complex) orthonormal system in $L_2(\Omega,\mu)$ satisfying Condition E($t$). 
Then there is an absolute  constant $C_1$ such that there exists a set $\{\xi^j\}_{j=1}^m\subset \Omega$ of $m \le C_1 t^2 N$ points with the property:
 For any $f=\sum_{i=1}^N c_iu_i$  we have  
\begin{equation*}
C_2 \|f\|_2^2 \le \frac{1}{m}\sum_{j=1}^m |f(\xi^j)|^2 \le C_3 t^2\|f\|_2^2, 
\end{equation*}
where $C_2$ and $C_3$ are absolute positive constants. 
\end{Theorem}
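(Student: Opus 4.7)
The plan is to recast the discretization problem in matrix language and then apply a sparsification theorem of Marcus--Spielman--Srivastava / Batson--Spielman--Srivastava type. First, I would associate to the orthonormal system the map $\Phi:\Omega\to\bbC^N$ defined by $\Phi(\bx)=(u_1(\bx),\dots,u_N(\bx))^T$, so that for $f=\sum_i c_i u_i$ one has $f(\bx)=\<c,\Phi(\bx)\>$. Orthonormality is equivalent to $\int_\Omega \Phi(\bx)\Phi(\bx)^*\,d\mu = I_N$, and Condition E($t$) should be read as the pointwise bound $\|\Phi(\bx)\|_2^2\le t^2 N$ on $\Omega$. In these terms, the conclusion of the theorem is precisely the spectral sandwich
$$ C_2\, I_N \preceq \frac{1}{m}\sum_{j=1}^m \Phi(\xi^j)\Phi(\xi^j)^* \preceq C_3 t^2\, I_N, $$
i.e.\ an equal-weight empirical realization of the continuous identity over $m\le C_1 t^2 N$ points.

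Next, I would check that the obvious probabilistic attempt falls short by only a logarithm. Taking $\xi^j$ iid from $\mu$ and setting $Y_j = m^{-1}\Phi(\xi^j)\Phi(\xi^j)^*$, one has $\bE\sum_j Y_j = I_N$ and, by Condition E($t$), $\|Y_j\|\le t^2 N/m$. A matrix Chernoff inequality of Tropp type then shows that $m\gtrsim t^2 N \log N$ samples suffice for a constant-ratio spectral approximation. This already establishes a weaker form of the statement; the sharper sample size $m\le C_1 t^2 N$ requires removing the logarithm, and this is exactly what the Kadison--Singer / MSS machinery delivers, at the admitted cost of weakening the upper spectral constant from $O(1)$ to $O(t^2)$.

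To finish, I would first approximate $\mu$ by a finitely atomic measure $\sum_{i=1}^M p_i \delta_{\eta_i}$ so that $\sum_i p_i\, \Phi(\eta_i)\Phi(\eta_i)^*$ is spectrally close to $I_N$ and each rescaled vector $v_i=\sqrt{p_i}\,\Phi(\eta_i)\in \bbC^N$ satisfies $\|v_i\|_2^2\le \alpha$ with $\alpha$ as small as we wish (a purely measure-theoretic Riemann-type step, relying on continuity of the quadratic form $c\mapsto \int |\<c,\Phi\>|^2\,d\mu$). Then I would invoke a Weaver-$\mathrm{KS}_2$/MSS partitioning theorem to decompose $\{1,\dots,M\}$ into roughly $M/(t^2 N)$ subsets on each of which the partial sum of $v_iv_i^*$ is spectrally comparable to $(t^2 N/M)\, I_N$; rescaling one such subset by $M/(t^2 N)$ brings its weights to $1/m$ with $m\asymp t^2 N$, yielding exactly the desired sandwich, with the $t^2$ in the upper constant reflecting the loss inherent in equalizing the weights.

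The main obstacle is the last step: matching the parameters of the MSS/Weaver partitioning theorem to the continuous setting, so that the smallness condition $\alpha\ll 1$ required for sparsification is translated back to the pointwise bound $\|\Phi\|_2^2\le t^2 N$ through a sufficiently fine but not wasteful preliminary atomic approximation of $\mu$. The factor $t^2$ in the upper spectral constant is not an accident but the price of insisting on equal weights $1/m$; the more flexible weighted statement of Theorem~\ref{BT2} avoids this price and recovers the cleaner $O(1)$ upper constant at the expense of allowing general $\lambda_j$.
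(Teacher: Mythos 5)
First, a point of reference: the paper does not prove Theorem \ref{CT2} at all --- it is quoted from \cite{LT} (and goes back, in the real case, to \cite{VT159}) --- so there is no internal proof to compare against. That said, your outline does follow the strategy used in that literature: reformulate the problem as decomposing the identity $\int_\Omega\Phi(\bx)\Phi(\bx)^*\,d\mu=I_N$ into rank-one matrices of small norm via a finitely atomic approximation of $\mu$, and then apply the iterated Marcus--Spielman--Srivastava (Weaver $\mathrm{KS}_2$) halving to extract a spectrally balanced part. Your diagnosis of why matrix Chernoff loses a logarithm and why MSS is needed to remove it is also accurate.

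The genuine gap is in your last step, and it is exactly the step that produces both the bound $m\le C_1t^2N$ and the constant $C_3t^2$. The MSS partition controls the \emph{spectral norm} of each part, not its \emph{cardinality}: after $k$ halvings with $2^{k}\asymp M/(t^2N)$ every part $S$ satisfies $c\,2^{-k}I\preceq\sum_{i\in S}v_iv_i^*\preceq C\,2^{-k}I$, but $|S|$ is a priori unrelated to $2^{-k}M$, since vectors $v_i$ of very small norm (points where $\sum_i|u_i(\bx)|^2$ is small --- Condition E($t$) gives no lower bound there) can be packed into $S$ in arbitrary numbers, and the theorem's conclusion is stated with the weight $1/m$, $m=|S|$. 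Closing this requires two arguments your sketch does not supply: (i) take the atomic approximation with \emph{equal} weights $p_i=1/M$ and choose, by pigeonhole, a part of at most average cardinality, $|S|\le 2^{-k}M\asymp t^2N$, which gives both the point count and the lower discretization bound $\frac{1}{|S|}\sum_{i\in S}|f(\eta_i)|^2\ge c\,\frac{2^{-k}M}{|S|}\|f\|_2^2\ge c\|f\|_2^2$; and (ii) bound $|S|$ from \emph{below} via the trace identity $\sum_{i\in S}\|v_i\|_2^2=\mathrm{tr}\bigl(\sum_{i\in S}v_iv_i^*\bigr)\ge c\,2^{-k}N$ combined with $\|v_i\|_2^2\le t^2N/M$, which yields $|S|\ge c\,2^{-k}M/t^2$ and hence the upper constant $C_3t^2$. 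Your phrase ``rescaling one such subset by $M/(t^2N)$ brings its weights to $1/m$'' silently assumes $|S|=2^{-k}M$ exactly; without (i) and (ii) neither the claimed number of points nor the factor $t^2$ in the upper bound is actually established, and it is precisely in step (ii) that Condition E($t$) is used a second time.
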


{\bf Condition E($t$).} We say that an orthonormal system $\{u_i(\bx)\}_{i=1}^N$ defined on $\Omega$ satisfies Condition E($t$) with a constant $t$ if for all $\bx\in \Omega$
%\be\label{ud5}
$$
 \sum_{i=1}^N |u_i(\bx)|^2 \le Nt^2.
$$

Theorem \ref{CT2} combined with Theorem \ref{AT1} gives the following analog of Theorem \ref{BT1}. We need some definitions for its formulation. For a fixed $m$ and a set of points  $\xi:=\{\xi^j\}_{j=1}^m\subset \Omega$ denote for a class $\bF$ (usually, a centrally symmetric compact in $L_2(\Omega,\mu)$)
$$
\varrho_m^{ls}(\bF,L_2) := \inf_{\xi,\,X_N} \sup_{f\in \bF} \|f-\ell 2 \bw_m(\xi,X_N)(f)\|_2.
$$
We now define $E(t)$-conditioned Kolmogorov width
$$
d_N^{E(t)}(\bF,L_p) :=     \inf_{\{u_1,\dots,u_N\}\, \text{satisfies Condition} E(t)}   \sup_{f\in \bF}\inf_{c_1,\dots,c_N}\|f-\sum_{i=1}^N c_iu_i\|_p.
$$

\begin{Theorem}\label{CT3} Let $\bF$ be a compact subset of $\C(\Omega)$. There exist two positive   constants $b$ and $B$ which may depend on $t$ such that
$$
\ro_{bn}^{ls}(\bF,L_2) \le Bd_n^{E(t)}(\bF,L_\infty).
$$
\end{Theorem}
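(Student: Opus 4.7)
The plan is to repeat the proof of Theorem \ref{BT1} with Theorem \ref{CT2} in place of Theorem \ref{BT2}. Given $n$, by the definition of $d_n^{E(t)}(\bF,L_\infty)$ as an infimum I would first choose an orthonormal system $\{u_i\}_{i=1}^n\subset L_2(\Omega,\mu)$ satisfying Condition E($t$) and for which the subspace $X_n:=\sp\{u_i\}_{i=1}^n$ is near-optimal, i.e.
$$\sup_{f\in \bF} d(f, X_n)_\infty \le 2\,d_n^{E(t)}(\bF, L_\infty).$$

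Next I would apply Theorem \ref{CT2} to this orthonormal system, obtaining points $\xi=\{\xi^j\}_{j=1}^m$ with $m\le C_1 t^2 n=:bn$ and absolute constants $c_2,c_3>0$ such that
$$ c_2\|u\|_2^2 \le \frac{1}{m}\sum_{j=1}^m|u(\xi^j)|^2 \le c_3 t^2\|u\|_2^2,\qquad u\in X_n.$$
The lower bound is precisely assumption \textbf{A1} of Section \ref{A} with $p=2$, constant $\sqrt{c_2}$, and the uniform weight $\bw_m=(1/m,\dots,1/m)$; assumption \textbf{A2} holds trivially because $\sum_\nu w_\nu=1$. Observe that, in contrast to the proof of Theorem \ref{BT1}, no analogue of Remark \ref{BR1} is needed here: the uniform weights already sum to $1$, so there is no need to enlarge $X_n$ by the constant functions.

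I would then invoke Theorem \ref{AT1} with $p=2$ to conclude that for every $f\in\bF$,
$$\|f-\ell 2\,\bw_m(\xi,X_n)(f)\|_2 \le (2c_2^{-1/2}+1)\,d(f,X_n)_\infty \le 2(2c_2^{-1/2}+1)\,d_n^{E(t)}(\bF,L_\infty).$$
Taking the supremum over $f\in\bF$ and using that $\varrho_{bn}^{ls}(\bF,L_2)$ is itself an infimum over admissible pairs $(\xi,X_N)$ yields the theorem with $b=C_1 t^2$ (depending on $t$) and $B=2(2c_2^{-1/2}+1)$ (absolute).

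I do not anticipate any real obstacle: the argument is a direct adaptation of the proof of Theorem \ref{BT1}. The only structural differences are that the subspace is required to admit an orthonormal basis satisfying E($t$) — exactly the class over which $d_n^{E(t)}$ optimizes — and that the uniform weight $\bw_m$ removes the need to modify $X_n$. The $t$-dependence enters only through the sample-size constant $b$, via the factor $t^2$ from Theorem \ref{CT2}.
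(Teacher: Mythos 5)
Your proposal is correct and is exactly the argument the paper intends: the paper merely states that Theorem \ref{CT2} combined with Theorem \ref{AT1} yields Theorem \ref{CT3}, and you have carried out that combination faithfully (near-optimal E($t$)-subspace, discretization with uniform weights giving \textbf{A1} and, trivially, \textbf{A2}, then Theorem \ref{AT1} with $p=2$). Your observation that the uniform weights make the analogue of Remark \ref{BR1} unnecessary is accurate.
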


We have discussed possible applications of two results from discretization -- Theorems \ref{CT1} and \ref{CT2}. The reader can find other recent results on discretization in \cite{DPTT}, \cite{DPSTT1}, \cite{VT168}, \cite{Kos}, and \cite{VT159}. 

{\bf Acknowledgements.} The author is grateful to Irina Limonova for useful comments and discussions. 

The work was supported by the Russian Federation Government Grant N{\textsuperscript{\underline{o}}}14.W03.31.0031. The paper contains results obtained in frames of the program \lq\lq Center for the storage and analysis of big data", supported by the Ministry of Science and High Education of Russian Federation (contract 11.12.2018 N{\textsuperscript{\underline{o}}}13/1251/2018 between the Lomonosov Moscow State University and the Fund of support of the National technological initiative projects).

\end{document}